\numberwithin{equation}{section}
\newtheorem{theorem}{Theorem}
\newtheorem{lemma}{Lemma}
\newtheorem{proposition}{Proposition}
\numberwithin{theorem}{section}
\numberwithin{corollary}{section}
\numberwithin{lemma}{section}
\numberwithin{definition}{section}
\numberwithin{proposition}{section}
\numberwithin{remark}{section}
\newcommand{\R}{\mathbb R}
\newcommand{\medint}{-\kern  -,375cm\int}
\title[A remark on optimal weighted Poincar\'e inequalities]{A remark on optimal weighted Poincar\'e inequalities for convex domains}
\author[V. Ferone - C. Nitsch  - C. Trombetti]{V. Ferone$^*$ - C. Nitsch$^*$    - C. Trombetti$^*$}
 \thanks{%
$^*$ Dipartimento di Matematica e Applicazioni ``R. Caccioppoli'', Universit\`{a}
degli Studi di Napoli Federico II, Complesso Monte S. Angelo, via Cintia
- 80126 Napoli, Italy; \\ email: \tt ferone@unina.it; c.nitsch@unina.it;
cristina@unina.it}
\keywords{Poincar\'e inequalities, $p$-Laplacian eigenvalues, Wirtinger inequalities.}
\begin{document}

\maketitle

\begin{abstract}
We prove a  sharp upper bound on convex domains, in terms of the diameter alone, of the best constant in a class of weighted Poincar\'e inequalities. The key point is the study of an optimal weighted Wirtinger inequality.
\end{abstract}

\section{Introduction}
%

It is well known that (see for instance \cite{Ma}), for any given open bounded Lipschitz connected set $\Omega$, a Poincar\'e inequality holds true, in the sense that there exists a positive constant $C_{\Omega,p}$ such that
\begin{equation}\label{eq_poinc}
\inf_{t\in \R} \|u-t\|_{L^p(\Omega)}\le C_{\Omega,p} \|Du\|_{L^p(\Omega)},
\end{equation}
for all Lipschitz functions $u$ in $\Omega$.

The value of the best constant in \eqref{eq_poinc} is the reciprocal of the first nontrivial Neumann eigenvalue of the $p$-Laplacian over $\Omega$.
 In \cite{PW} (see also \cite{B}), it has been proved that, if $p=2$, and $\Omega$ is convex, in any dimension
\begin{equation}\label{eq_classica}
\frac1{C_{\Omega,2}}=\mathop{\min_{u\in H^{1}(\Omega)}}_{\int_\Omega u=0} \dfrac{\displaystyle\left(\int_\Omega |Du|^2\right)^\frac12}{\displaystyle\left(\int_\Omega |u|^2\right)^\frac12}\ge\frac{\pi}{d},
\end{equation}
where $d$ is the diameter of $\Omega$.
Observe that the last term of \eqref{eq_classica} is exactly the value achieved,  in dimension $n=1$ on any interval of length $d$, by the first nontrivial Laplacian eigenvalue (without distinction between the Neumann and the Dirichlet conditions).

The proof of \eqref{eq_classica} in \cite{PW, B} indeed relies on the reduction to a one dimensional problem. At this aim, for any given smooth admissible test function  $u$ in the Rayleigh quotient in \eqref{eq_classica}, the authors show that it is possible to perform a clever slicing of the domain $\Omega$ in convex sets which are as tiny as desired in at least $n-1$ orthogonal directions. On each one of such convex components of $\Omega$, they are able to show that the Rayleigh quotient of $u$ can be approximated by a 1-dimensional weighted Rayleigh quotient. This leads the authors to look for the best constants of a class of one dimensional weighted Poincar\'e-Wirtinger inequalities.
The result was later generalized to $p=1$ in \cite{AD} and only recently to $p\ge 2$ in \cite{ENT} and for any $p>1$ in the framework of compact manifolds in \cite{NV,V}.
Other optimal Poincar\'e inequalities can be found in \cite{BCT, BT, BCM, CDB, EFKNT, S, W}.

In this paper we consider a weighted Poincar\'e inequality, namely, for $p>1$, given a nonnegative log-concave function $\omega$ on $\Omega$ there exists a positive constant $C_{\Omega,p,\omega}$ such that, for every Lipschitz function $u$
\begin{equation}\label{eq_wepoinc}
\inf_{t\in \R} \|u-t\|_{L_\omega^p(\Omega)}\le C_{\Omega,p,\omega} \|Du\|_{L_\omega^p(\Omega)}.
\end{equation}
Here $\|\cdot\|_{L_\omega^p}$ denotes the weighted Lebesgue norm. 
The best constant $C_{\Omega,p,\omega}$ in \eqref{eq_wepoinc} is given by 
\begin{equation}\label{eq_quoz}
\frac1{C_{\Omega,p,\omega}}=\mathop{\inf_{u \,\text{Lipschitz}}}_{\int_\Omega |u|^{p-2}u\,\omega=0} \dfrac{\displaystyle\left(\int_\Omega |Du|^p\,\omega\right)^\frac1p}{\displaystyle\left(\int_\Omega |u|^p\,\omega\right)^\frac1p}
\end{equation}
Our main result is the following.
\begin{theorem}[Main Theorem]\label{teo_main}
Let $\Omega\subset\R^n$ be a bounded convex set having diameter $d$ and let $\omega$ be a nonnegative log-concave function on $\Omega$. For $p> 1$ and in any dimension we have 
\begin{equation}\label{eq_main}
C_{\Omega,p,\omega} \le  \frac{d}{\pi_p}
\end{equation}
where 
\begin{equation}\label{eq_pp}
\pi_p=2\int_0^{+\infty} \frac{1}{1+\frac{1}{p-1}s^p}ds=2\pi\frac{(p-1)^{1/p}}{p(\sin(\pi/p))}.
\end{equation}
\end{theorem}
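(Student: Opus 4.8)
The plan is to follow the Payne--Weinberger--Bebendorf strategy and reduce the $n$-dimensional estimate to a one-dimensional weighted inequality. It suffices to prove the reverse bound on the Rayleigh quotient in \eqref{eq_quoz}, i.e.\ that every Lipschitz $u$ with $\int_\Omega |u|^{p-2}u\,\omega=0$ satisfies $\int_\Omega |Du|^p\,\omega\ge (\pi_p/d)^p\int_\Omega |u|^p\,\omega$. First I would fix such a $u$ and apply a continuous bisection argument to the signed density $f=|u|^{p-2}u\,\omega$, whose total integral over $\Omega$ vanishes. Cutting a convex cell by a moving hyperplane, the intermediate value theorem lets me split every cell into two convex subcells on each of which $\int f=0$, while the diameters do not increase. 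Iterating and always cutting a cell transversally to a direction in which it is still fat, I obtain, for every $\eta>0$, a finite partition of $\Omega$ into convex needle-like cells $\Omega_i$ with $\int_{\Omega_i}|u|^{p-2}u\,\omega=0$, each contained in a cylinder of axis length $\le d$ and cross-sectional diameter $<\eta$. Since the claimed inequality is additive in both numerator and denominator, it is enough to establish it, uniformly as $\eta\to0$, on a single needle.

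On a fixed needle with axis $e_n$ and coordinates $(x',x_n)$, I would bound $|Du|^p\ge |\partial_{x_n}u|^p$ and integrate in $x'$ over the convex cross-sections. This produces the one-dimensional weight $W(x_n)=\int \omega(x',x_n)\,dx'$, which is log-concave by the Pr\'ekopa--Leindler inequality (a marginal of a log-concave function is log-concave), while the $x_n$-extent of the needle is at most $d$. As $\eta\to0$ the function $u$ becomes, up to a negligible error, a function of $x_n$ alone, so the matter reduces to the following sharp one-dimensional statement: for any log-concave $W\ge0$ on an interval $I$ of length $\ell\le d$ and any Lipschitz $v$ with $\int_I |v|^{p-2}v\,W=0$,
\[
\int_I |v'|^p\,W\ \ge\ \left(\frac{\pi_p}{\ell}\right)^{p}\int_I |v|^p\,W.
\]

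The core and the main obstacle is this weighted Wirtinger inequality with the sharp, $p$-dependent constant $\pi_p/\ell$. I would first reduce to an extremal $v$, which, being a first nonconstant eigenfunction of a one-dimensional $p$-Laplacian with Neumann data, may be taken monotone with a single zero $z\in I$ fixed by the constraint. Writing $W=e^{-\phi}$ with $\phi$ convex, the Euler--Lagrange equation reads
\[
\bigl(|v'|^{p-2}v'\bigr)'-\phi'\,|v'|^{p-2}v'+\lambda\,|v|^{p-2}v=0,
\]
and the convex drift $-\phi'\,|v'|^{p-2}v'$ works in our favour. The benchmark is the constant weight $\phi\equiv\mathrm{const}$, for which the equation is the $p$-harmonic oscillator, solved explicitly by the $p$-trigonometric function whose half-period is exactly the $\pi_p$ of \eqref{eq_pp}; this produces the value $(\pi_p/\ell)^p$ and shows the constant is attained. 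The decisive point is to prove that log-concavity can only raise $\lambda$, so that the flat weight is the worst case. I expect to carry this out by a Pr\"ufer-type phase comparison: introduce the phase of the pair $\bigl(v,\,|v'|^{p-2}v'\bigr)$, derive a first-order ODE for it, and use the monotonicity of $\phi'$ in a Sturm comparison against the unweighted phase to conclude that $v$ cannot complete a half $p$-oscillation over an interval shorter than $\ell$ unless $\phi$ is constant. Summing the resulting needle estimates and letting $\eta\to0$ then yields \eqref{eq_main}.
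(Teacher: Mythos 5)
Your overall architecture is the same as the paper's: a hyperplane-bisection decomposition of $\Omega$ into thin convex needles that preserves the constraint $\int|u|^{p-2}u\,\omega=0$, followed by a sharp one-dimensional weighted Wirtinger inequality for log-concave weights. The slicing and the passage to the needle limit are essentially fine (the paper controls the errors by a modulus of continuity; it uses the weight $f_i(t)=g_i(t)\,\omega(t,0,\dots,0)$ with $g_i$ the cross-sectional volume, log-concave by Brunn--Minkowski, whereas your marginal $W(x_n)=\int\omega\,dx'$, log-concave by Pr\'ekopa, works equally well). The genuine gap is in your proof of the one-dimensional inequality, which is the heart of the matter (Proposition \ref{pr_wirtinger} in the paper).

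A pointwise Sturm/Pr\"ufer comparison against the unweighted phase cannot work as you state it. Writing the Euler--Lagrange equation as $(-u'|u'|^{p-2})'=\lambda u|u|^{p-2}+h'(x)\,u'|u'|^{p-2}$ with $h=\log W$ concave, and passing to the Riccati/phase variable $v=u/u'$, one finds
\begin{equation*}
v'=1+\frac{1}{p-1}\bigl(\lambda|v|^{p}+h'(x)\,v\bigr).
\end{equation*}
The drift contribution $h'(x)v/(p-1)$ changes sign at the zero of the eigenfunction: if, say, $h'>0$ there, the weighted phase moves strictly slower than the unweighted one where $v<0$ and strictly faster where $v>0$. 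So neither phase velocity dominates the other pointwise, and no Sturm-type comparison with the flat problem can be run over the whole interval; the monotonicity of $\phi'=-h'$ alone does not repair this. What is actually true is a symmetrized, not pointwise, statement: the time lost on one side of the zero exceeds the time gained on the other. The paper implements this in two steps. First (Lemma \ref{lem_kappa}) it freezes the drift at the zero $x_\lambda$ of the extremal, replacing $h'(x)$ by the constant $\kappa=h'(x_\lambda)$: a comparison-principle argument on $[0,x_\lambda]$ and $[x_\lambda,L]$ separately, using that $h'$ is nonincreasing, shows this replacement can only decrease the eigenvalue. Second (Lemma \ref{lem_allkappa}), for the now-autonomous equation $v'=1+\frac{1}{p-1}(\mu|v|^{p}+\kappa v)$, the variable $v$ is a strictly increasing bijection onto $\R$, so one can integrate $dx=dv/v'$, pair the contributions at $v$ and $-v$, and use the elementary convexity inequality $\frac{1}{A+t}+\frac{1}{A-t}\ge\frac{2}{A}$ to obtain $L\ge 2\int_0^{+\infty}\bigl(1+\frac{1}{p-1}\mu v^{p}\bigr)^{-1}dv=\pi_p\,\mu^{-1/p}$, i.e.\ $\mu\ge(\pi_p/L)^p$, with \eqref{eq_pp} supplying the identification of the constant. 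If you wish to keep your phase framework, you must build this pairing around the eigenfunction's zero (or the preliminary reduction to an exponential weight) into the argument; without it, the decisive step ``log-concavity can only raise $\lambda$'' is unproved.
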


Other optimal weighted inequalities can be found in literature (see \cite{Ma} and the reference therein) but to our knowledge similar explicit sharp bounds were obtained only for $p=1$ and $p=2$ (see for instance \cite{CW1,CW2}). 
We observe that when $\omega=1$ estimate \eqref{eq_main} is the optimal estimate already obtained in the unweigthed case in \cite{AD,B,ENT,PW,V}.    
Indeed $\dfrac{d}{\pi_p}$ is the optimal constant of the one-dimensional unweighted Poincar\'e--Wirtinger inequality on a segment of length $d$ (see for instance \cite{BBCDG}, \cite{ReWa}), namely:

$$\frac{\pi_p}{d}=\displaystyle\inf_{W_0^{1,p}(0,d)}\displaystyle\frac{\displaystyle\left(\int_0^d|u'|^p\right)^\frac1p}{\displaystyle\left(\int_0^d|u|^p\right)^\frac1p}$$

Explicit expression for $\pi_p$ (as \eqref{eq_pp}) can be found for instance in \cite{ BBCDG, Li, ReWa, sta1, sta2}. The fact that $\pi_2=\pi$ is consistent with the classical Wirtinger inequality (see \cite{HL}) and obviously also with \eqref{eq_classica}.


In the spirit of the proof of \eqref{eq_classica} by Payne and Weinberger \cite{PW}, our proof of Theorem \ref{teo_main} is based on the following estimate on the best constant in a class of weighted Wirtinger inequalities. 

\begin{proposition}\label{pr_wirtinger}
Let $f$ be a nonnegative log-concave function defined on $[0,L]$ and $p> 1$
then
\begin{equation}\label{eq_wirtinger}
\mathop{\inf_{u\in W^{1,p}(0,L)}}_{\int_0^L |u|^{p-2}u\,f=0} \frac{\displaystyle \int_0^L |u'(x)|^p f(x)dx}{\displaystyle\int_0^L |u(x)|^p f(x)dx}\ge\mathop{\min_{u\in W^{1,p}(0,L)}}_{\int_0^L |u|^{p-2}u=0} \frac{\displaystyle \int_0^L |u'(x)|^p dx}{\displaystyle\int_0^L |u(x)|^p dx}=\left(\frac{\pi_p}{L}\right)^p.
\end{equation}
\end{proposition}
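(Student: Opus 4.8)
The plan is to reduce the weighted inequality \eqref{eq_wirtinger} to a one–dimensional ODE analysis of the extremal, and then to extract the sign of a single weight–dependent integral from the monotonicity of $g=\log f$; the equality with the unweighted minimum $(\pi_p/L)^p$ is taken for granted, so only the first inequality must be proved.

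First I would make the standard reductions. By approximating $f$ by smooth, strictly positive log-concave weights bounded away from $0$ and $\infty$ (truncating near the endpoints and adding a small constant) and passing to the limit, I may assume $g=\log f$ is smooth and concave on $[0,L]$; if $f$ is supported on a proper subinterval $[\alpha,\beta]$ the problem localizes there and the bound only improves, since $\beta-\alpha\le L$. For such $f$ the infimum is attained, by the direct method, at a first nontrivial eigenfunction $u$: writing $\lambda$ for the minimum value and dividing $(f|u'|^{p-2}u')'+\lambda f|u|^{p-2}u=0$ by $f$, one finds
\[
\bigl(|u'|^{p-2}u'\bigr)' + g'(x)\,|u'|^{p-2}u' + \lambda\,|u|^{p-2}u = 0\ \text{ on }(0,L),\qquad u'(0)=u'(L)=0 .
\]
The constraint forces $u$ to change sign, and by a nodal/Sturm-type argument the minimizer changes sign \emph{exactly once}, at some $x_0$. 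I would then show $u$ is strictly monotone on all of $[0,L]$: setting $v=|u'|^{p-2}u'$, the equation reads $v'=-g'v-\lambda|u|^{p-2}u$, so at an interior critical point $x_1$ one has $v(x_1)=0$ and $v'(x_1)=-\lambda|u(x_1)|^{p-2}u(x_1)$; hence $u$ can only have a strict local maximum where $u>0$ and a strict local minimum where $u<0$, which together with $u'(0)=0$ and the single sign change forces $u'<0$ throughout $(0,L)$.

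Second, I would introduce the generalized Prüfer transformation adapted to the $p$-Laplacian, using the $p$-trigonometric functions of \cite{BBCDG,ReWa}. With the energy $E=\tfrac{p-1}{p}|u'|^p+\tfrac{\lambda}{p}|u|^p$, the equation gives the clean identity $E'=-g'(x)\,|u'|^p$, i.e.\ the weight acts only on the radial variable. Writing $(u,u')$ in $p$-polar coordinates with phase $\theta$, strict monotonicity of $u$ makes $\theta$ strictly increasing from $\theta(0)=0$ to $\theta(L)=\Pi_p$, where $\Pi_p$ is the half-period of the $p$-trigonometric functions, normalized so that the unweighted problem has $\theta'\equiv\mu$ with $\mu=c_p\lambda^{1/p}$ and $\mu L=\Pi_p$ is equivalent to $\lambda=(\pi_p/L)^p$. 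The phase obeys an equation of the form
\[
\theta'(x)=\mu-g'(x)\,B_p\bigl(\theta(x)\bigr),
\]
where $B_p$ is the $p$-analogue of $\tfrac12\sin 2\theta$ (for $p=2$ one gets exactly $\theta'=\sqrt\lambda-\tfrac12 g'\sin2\theta$). Integrating and using $\int_0^L\theta'=\Pi_p$ yields the exact identity $\mu L=\Pi_p+\int_0^L g'(x)\,B_p(\theta(x))\,dx$, so the whole bound $\lambda\ge(\pi_p/L)^p$ reduces to the single sign condition $\int_0^L g'(x)\,B_p(\theta(x))\,dx\ge 0$.

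Third, and this is the heart of the matter, I would deduce this sign from the concavity of $g$, i.e.\ from $g'$ being nonincreasing. The function $B_p$ is odd about the midpoint, $B_p(\Pi_p-\theta)=-B_p(\theta)$, and nonnegative on $[0,\Pi_p/2]$ (where $u>0$). Using the strictly increasing change of variable $x\mapsto\theta$, I would pair each $x\le x_0$ with its mirror $x^\ast\ge x_0$ defined by $\theta(x^\ast)=\Pi_p-\theta(x)$; since $x\le x^\ast$ and $g'$ is nonincreasing, $a:=g'(x)\ge g'(x^\ast)=:b$. Writing $\beta:=B_p(\theta(x))\ge0$, the paired contribution to the integral equals $\beta\bigl[\tfrac{a}{\mu-a\beta}-\tfrac{b}{\mu+b\beta}\bigr]=\beta\,\dfrac{(a-b)\mu+2ab\,\beta}{\theta'(x)\,\theta'(x^\ast)}$, where the two denominators are exactly the \emph{positive} phase speeds $\theta'(x)$ and $\theta'(x^\ast)$. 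When $ab\ge0$ the numerator is clearly nonnegative; when, say, $a>0>b$, the positivity $\mu>a\beta$ and $\mu>|b|\beta$ (which is just $\theta'>0$) gives $(a+|b|)\mu>2a|b|\beta$, so the numerator is again nonnegative. Summing over the pairing gives $\int_0^L g'B_p(\theta)\,dx\ge0$ and hence $\mu L\ge\Pi_p$, i.e.\ $\lambda\ge(\pi_p/L)^p$.

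The main obstacle is precisely this last pointwise pairing inequality: it is where log-concavity enters, and the delicate feature is that the comparison denominators are not constants but the phase speeds $\theta'$ at the two mirror points, so the a priori positivity $\theta'>0$ must be fed back into the estimate. Two further technical points require care: the rigorous construction and the oddness and sign properties of the $p$-trigonometric phase $B_p$ for general $p>1$ (routine but notation-heavy), and the approximation argument handling weights that vanish or degenerate at the endpoints of $[0,L]$.
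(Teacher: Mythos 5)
Your strategy---a generalized Pr\"ufer phase for the $p$-Laplacian plus a mirror-pairing argument that exploits the monotonicity of $g'$---is genuinely different from the paper's proof, which first reduces the general log-concave weight to an exponential one $e^{\kappa x}$ by a comparison principle applied at the nodal point of the minimizer (Lemma \ref{lem_kappa}), and then treats the exponential weight by integrating the \emph{autonomous} Riccati equation \eqref{eq_v} for $v=u/u'$ and symmetrizing in $v$ (Lemma \ref{lem_allkappa}). Your pairing $\theta\leftrightarrow\Pi_p-\theta$ is exactly the analogue of that symmetrization, carried out with a non-constant $g'$, and that is precisely where your argument has a genuine gap: the assertion that ``strict monotonicity of $u$ makes $\theta$ strictly increasing'' is false as an inference. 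Monotonicity of $u$ only confines the phase to the half-period $[0,\Pi_p]$ (the phase cannot recross the axes, since $\theta'=\mu>0$ wherever $B_p(\theta)=0$); inside a quadrant nothing local prevents $\theta'<0$. For $p=2$ the phase equation is $\theta'=\sqrt{\lambda}-\tfrac12\,g'\sin 2\theta$, and at a point with $\theta=\pi/4$ (so $u>0$, $u'<0$, i.e.\ $u$ strictly decreasing there) and $g'>2\sqrt{\lambda}$ one gets $\theta'<0$; since $g'=(\log f)'$ is unbounded for perfectly admissible log-concave weights, this configuration cannot be excluded pointwise. Everything in your third step rests on $\theta'>0$: the change of variables $x\mapsto\theta$, the well-definedness and the ordering $x\le x^*$ of the mirror point (without injectivity of $\theta$ the equation $\theta(x^*)=\Pi_p-\theta(x)$ need not have a unique solution), and the strict positivity of the denominators $\theta'(x)\,\theta'(x^*)$, which you explicitly feed into the case $a>0>b$. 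So what you call an ``a priori positivity'' is in fact an unproven lemma on which the whole proof rests.

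I believe $\theta'>0$ does hold for the actual minimizer, but proving it is a substantive task of comparable depth to the paper's Lemma \ref{lem_kappa}, not a routine detail: for $p=2$, setting $w=-u'/u$ one has $w'=w^2-g'w+\lambda$, and one must show the trajectory never enters the region $\{w^2-g'w+\lambda<0\}$ where the phase reverses; this requires an invariant-region/shooting argument that uses the concavity of $g$ (monotonicity of the roots of the Riccati polynomial) together with \emph{both} Neumann conditions $w(0)=w(L)=0$, and it would have to be redone for the $p$-Riccati equation for general $p>1$. The paper's two-step architecture avoids the issue entirely: once the weight is frozen to $e^{\kappa x}$ with $\kappa=h'(x_\lambda)$, the Riccati equation \eqref{eq_v} is autonomous, so monotonicity of $v$ is automatic and the symmetrization costs nothing. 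To repair your proof you must either supply the missing invariance lemma or follow the paper in first reducing to constant $g'$. A further, minor, point: for $1<p<2$ the derivation of the Euler--Lagrange equation from the direct method is itself delicate (the paper invokes the technique of \cite{DGS}); your appeal to ``the direct method'' glosses over this, but that is a citation issue rather than a structural one.
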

For an insight into generalized Wirtinger inequalities, and more generally into weighted Hardy inequalities, we refer to \cite{KP,Ma,Tu}, other results can be found for instance in  \cite{CD, DGS, FGR, GR, R}.

In Section \ref{sec_wirtinger}, we prove Proposition \ref{pr_wirtinger}, while in Section \ref{sec_red}, we employ a ``slicing argument'' to pass from the $n$--dimensional to the one--dimensional case.

\section{Proof of Proposition \ref{pr_wirtinger}}\label{sec_wirtinger}

For the reader convenience we have split the claim in two Lemmata.

In \cite{ENT} it has been proved the following lemma for which we include the proof for the sake of completeness.
\begin{lemma}\label{lem_kappa}
Let $f$ be a smooth positive log-concave function defined on $[0,L]$ and $p>1$.
Then there exists $\kappa\in\R$ such that 
\begin{equation}\label{eq_kappa}
\mathop{\inf_{u\in W^{1,p}(0,L)}}_{\int_0^L |u|^{p-2}u\,f=0} \frac{\displaystyle \int_0^L |u'(x)|^p f(x)dx}{\displaystyle\int_0^L |u(x)|^p f(x)dx}\ge \mathop{\inf_{u\in W^{1,p}(0,L)}}_{\int_0^L |u|^{p-2}u\,e^{\kappa x}=0} \frac{\displaystyle \int_0^L |u'(x)|^p e^{\kappa x}dx}{\displaystyle\int_0^L |u(x)|^p e^{\kappa x}dx}
\end{equation}
\end{lemma}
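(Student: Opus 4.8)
The plan is to work with a minimizer $u$ of the weighted quotient on the left of \eqref{eq_kappa} and to transfer it, at the cost of replacing $f$ by a suitable exponential, to the quotient on the right. First I would invoke the direct method to produce a minimizer $u\in W^{1,p}(0,L)$ of the constrained quotient with weight $f$; writing $\lambda_f$ for its value, $u$ solves the Euler--Lagrange equation $-(|u'|^{p-2}u'f)'=\lambda_f|u|^{p-2}u\,f$ with the natural Neumann conditions $u'(0)=u'(L)=0$. Since $u$ is the first nontrivial eigenfunction of a weighted one--dimensional $p$--Laplacian with Neumann data, a Sturm--type oscillation argument shows that $u$ is strictly monotone and changes sign exactly once, say at $x_0\in(0,L)$, with $u'(x_0)\neq0$.

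Next I would bring in the one--parameter family of exponential weights $e^{\kappa x}$ and exploit log--concavity. Writing $g=\log f$ (concave, by hypothesis), I set $\kappa=g'(x_0)$, the slope of $\log f$ at the nodal point. Concavity then gives the tangent--line bound $g(x)\le g(x_0)+\kappa(x-x_0)$, so that the ratio $\phi(x):=f(x)e^{-\kappa x}$ is itself log--concave and attains its maximum precisely at $x_0$. The key comparison I aim for is $R_{e^{\kappa x}}(u)\le R_f(u)=\lambda_f$, where $R_w(u)=\frac{\int_0^L|u'|^pw}{\int_0^L|u|^pw}$; equivalently, introducing the probability measures $d\mu\propto|u'|^pe^{\kappa x}\,dx$ and $d\nu\propto|u|^pe^{\kappa x}\,dx$, I must show $\int\phi\,d\mu\ge\int\phi\,d\nu$.

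This last inequality is where the geometry enters. Because $u$ is monotone with $u(x_0)=0$, $u'(x_0)\neq0$ and $u'(0)=u'(L)=0$, the likelihood ratio $d\mu/d\nu=|u'/u|^p$ vanishes at the endpoints and blows up at $x_0$; hence $\mu$ is more concentrated about $x_0$ than $\nu$. Since $\phi$ is unimodal with peak at the \emph{same} point $x_0$, comparing the two averages through their super--level sets (which are subintervals containing $x_0$) yields $\int\phi\,d\mu\ge\int\phi\,d\nu$, and therefore $R_{e^{\kappa x}}(u)\le\lambda_f$. It then remains to pass from the value $R_{e^{\kappa x}}(u)$ to the infimum on the right of \eqref{eq_kappa}: I would adjust $u$ to an admissible competitor for the $e^{\kappa x}$--problem, for instance by rescaling its positive and negative parts so that the constraint $\int|{\cdot}|^{p-2}({\cdot})\,e^{\kappa x}=0$ holds, which changes the quotient only through a mediant of the one--signed quotients, so that the right--hand infimum is bounded above by $\lambda_f$.

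The main obstacle is precisely the concentration step $\int\phi\,d\mu\ge\int\phi\,d\nu$: it must be made rigorous for general $p>1$, where linearity is lost and one cannot diagonalize, so the comparison of the measures $|u'|^p\,dx$ and $|u|^p\,dx$ has to be extracted from the Euler--Lagrange equation and the monotonicity of $u$ rather than from explicit eigenfunctions. A secondary technical point is the admissibility correction just mentioned, which must be arranged without raising the quotient above $\lambda_f$; and one should record that the smoothness and positivity assumed here for $f$ are recovered for a general nonnegative log--concave weight by a routine approximation.
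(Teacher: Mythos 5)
You pick the same $\kappa$ as the paper does (the slope of $\log f$ at the zero $x_0$ of the constrained minimizer $u$), but your route after that point is different from the paper's, and its central step has a genuine gap. You reduce everything to the claim $\int\phi\,d\mu\ge\int\phi\,d\nu$, where $\phi=f e^{-\kappa x}$ is log-concave with maximum at $x_0$, $d\mu\propto|u'|^p e^{\kappa x}dx$ and $d\nu\propto|u|^p e^{\kappa x}dx$, and you justify it by a layer-cake comparison: since the super-level sets of $\phi$ are intervals containing $x_0$, the claim is equivalent to $\mu(I)\ge\nu(I)$ for \emph{every} interval $I\ni x_0$, and you assert this because the likelihood ratio $d\mu/d\nu=\mathrm{const}\cdot|u'/u|^p$ vanishes at the endpoints and blows up at $x_0$ (``$\mu$ is more concentrated about $x_0$ than $\nu$''). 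That implication is false at this level of generality. Sketch of a counterexample: on $[0,3]$ with $x_0=1.5$, let $\nu$ place mass $0.4$, $0.2$, $0.4$ on small neighbourhoods of $0.1$, $1.5$, $2.9$ respectively, and let $w$ be unimodal with peak at $1.5$, with $w\approx1.5$ near $0.1$, a thin spike at $1.5$ whose average against the middle block equals $2$, and $w\approx0$ near $2.9$ and at both endpoints. Then $d\mu=w\,d\nu$ is (up to negligible error) a probability measure, $w$ vanishes at the endpoints and is as large as you like at $x_0$, yet for $I=[1,3]\ni x_0$ one gets $\mu(I)\approx0.4<0.6=\nu(I)$. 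So ``vanishes at the endpoints and blows up at $x_0$'' (even together with unimodality of the ratio) cannot yield the interval domination you need; any proof of your key inequality would have to extract quantitative structure from the Euler--Lagrange equation, and that is exactly the part your proposal leaves open. As written, the proof does not go through.

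There is also a secondary gap in the admissibility step. Rescaling the positive and negative parts, $v=\alpha u_+-\beta u_-$, so that $\int_0^L|v|^{p-2}v\,e^{\kappa x}=0$, produces a Rayleigh quotient that is a mediant of $R_{e^{\kappa x}}(u_+)$ and $R_{e^{\kappa x}}(u_-)$ with weights depending on $\alpha,\beta$; this mediant is controlled by $\max\bigl(R_{e^{\kappa x}}(u_+),R_{e^{\kappa x}}(u_-)\bigr)$, not by $R_{e^{\kappa x}}(u)$, so even granting your concentration step the chain of inequalities does not close. The natural repair is to prove the one-signed inequalities $R_{e^{\kappa x}}(u_\pm)\le R_f(u_\pm)$ on $[0,x_0]$ and $[x_0,L]$ separately, using that $R_f(u_+)=R_f(u_-)=\lambda_f$ (integrate the equation against $u$ on each nodal domain; the boundary terms vanish), but these are again precisely the unproved concentration-type statements. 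For contrast, the paper never tests the exponential problem with $u$ at all: it assumes by contradiction that $\lambda<\bar\lambda$, takes the eigenfunction $u_{\bar\lambda}$ of the problem with weight $e^{\kappa x}$, and applies the comparison principle on $[0,x_\lambda]$ and on $[x_\lambda,L]$, using only that $h'=(\log f)'$ is non-increasing, to force the nodal point of $u_{\bar\lambda}$ to lie both strictly to the left and strictly to the right of $x_\lambda$ --- a contradiction. That argument avoids entirely the measure comparison on which your proposal hinges.
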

\begin{proof}
By standard compactness argument (see \cite[Theorem 1.5 pag. 28]{KP}) the positive infimum on the left had side of \eqref{eq_kappa} is achieved by some function $u_\lambda$
belonging to $$\left\{u\in W^{1,p}(0,L),\int_0^L |u(x)|^{p-2}u(x)f(x)dx=0\right\}.$$ As aspected, such a minimizer is also a $C^1(0,L)$ solution to the following Neumann eigenvalue problem 
\begin{equation}\label{eq_gangbo}
\left\{\begin{array}{ll}
(-u'|u'|^{p-2})'=\lambda u|u|^{p-2}+h'(x)u'|u'|^{p-2} & x\in(0,L)\\\\
u'(0)=u'(L)=0.
\end{array}\right.
\end{equation} 
Here $h(x)=\log f(x)$ is a smooth bounded concave function and $\lambda$ is  the left hand side of  \eqref{eq_kappa}.
We emphasize that the usual derivation of \eqref{eq_gangbo} as the Euler Lagrange of a Rayleigh quotient it is rigorous only to handle the case $p\ge 2$. A refined technique similar to the one worked out in \cite[Lemma 2.4]{DGS} is necessary when $1<p<2$.  

Since it is not difficult to prove that for all $0<L_1<L$
\begin{equation*}
\mathop{\inf_{u\in W^{1,p}(0,L)}}_{\int_0^L |u|^{p-2}u\,f=0} \frac{\displaystyle \int_0^L |u'(x)|^p f(x)dx}{\displaystyle\int_0^L |u(x)|^p f(x)dx}< \mathop{\inf_{u\in W^{1,p}(0,L_1)}}_{\int_0^{L_1} |u|^{p-2}u\,f=0} \frac{\displaystyle \int_0^{L_1} |u'(x)|^p f(x)dx}{\displaystyle\int_0^{L_1} |u(x)|^p f(x)dx},
\end{equation*}
then $u_\lambda$ vanishes in one and only one point namely $x_\lambda\in(0,L)$  and without loss of generality we may assume that ${u_\lambda}(L)<0<{u_\lambda}(0)$.

We claim that if $\kappa =h'(x_\lambda)$ then 

$$\lambda\ge \mathop{\min_{u\in W^{1,p}(0,L)}}_{\int_0^L |u|^{p-2}u\,e^{\kappa x}=0} \frac{\displaystyle \int_0^L |u'(x)|^pe^{\kappa x} dx}{\displaystyle\int_0^L |u(x)|^p e^{\kappa x}dx} \equiv\bar\lambda.$$

Arguing by contradiction we assume that $\lambda< \bar \lambda$. Therefore there exists a function $u_{\bar \lambda}$ solution to
\begin{equation*}
\left\{\begin{array}{ll}
(-u'|u'|^{p-2})'=\bar\lambda  u|u|^{p-2}+h'(x_\lambda) u'|u'|^{p-2} & x\in(0,L)\\\\
 u'(0)= u'(L)=0
\end{array}\right.
\end{equation*} 

Standard arguments ensures that $u_{\bar\lambda}$ is strictly monotone in $(0,L)$ and therefore vanishes in one and only one point namely $x_{\bar \lambda}\in(0,L)$. We assume without loss of generality that $u_{\bar\lambda}(L) < 0 < u_{\bar \lambda}(0)$. Since $h'$ is non increasing in $[0,L]$, a strightforward consequence of the comparison principle applied to $u_\lambda$ and $u_{\bar\lambda}$ on the interval $[0,x_\lambda]$ enforces $x_{\bar \lambda}< x_\lambda$. On the other hand the comparison principle applied to $u_\lambda$ and $u_{\bar\lambda}$ on the interval $[x_\lambda,L]$ enforces $x_{\bar \lambda}> x_\lambda$ and eventually a contradiction arises.
\end{proof}



\begin{lemma}\label{lem_allkappa}
For all $\kappa\in\R$ and $p>1$
\begin{equation}\label{eq_ND}
\mathop{\min_{u\in W^{1,p}(0,L)}}_{\int_0^L |u|^{p-2}u\,e^{\kappa x}=0} \frac{\displaystyle \int_0^L |u'(x)|^pe^{\kappa x} dx}{\displaystyle\int_0^L |u(x)|^p e^{\kappa x}dx}\ge \left(\frac{\pi_p}{L}\right)^p
\end{equation}
\end{lemma}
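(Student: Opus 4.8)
The plan is to study the Euler--Lagrange equation of the minimizer of \eqref{eq_ND} directly in the phase plane. After the reflection $x\mapsto L-x$, which turns the weight $e^{\kappa x}$ into $e^{-\kappa x}$ and leaves the Rayleigh quotient unchanged, I may assume $\kappa\ge 0$; the case $\kappa=0$ is the unweighted inequality and holds with equality. For $\kappa>0$, let $u$ attain the minimum and let $\lambda$ be the left-hand side of \eqref{eq_ND}. Exactly as in Lemma \ref{lem_kappa} (with the refined regularity of \cite{DGS} when $1<p<2$), $u$ solves the autonomous damped $p$--oscillator $(|u'|^{p-2}u')'+\kappa|u'|^{p-2}u'+\lambda|u|^{p-2}u=0$ with $u'(0)=u'(L)=0$, and is strictly monotone on $(0,L)$ with a single interior zero; say $u(0)=a>0>u(L)=-b$, so that $0$ and $L$ are the two turning points.

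Next I would introduce the dissipated energy and a phase variable. Set $E=\frac{p-1}{p}|u'|^p+\frac{\lambda}{p}|u|^p$; multiplying the equation by $u'$ gives $E'=-\kappa|u'|^p\le0$. Define the instantaneous amplitude $A=(pE/\lambda)^{1/p}$ and the normalized height $\xi=u/A\in[-1,1]$; since $u'<0$ one finds $|u'|=c\,A\,(1-|\xi|^p)^{1/p}$ with $c:=(\lambda/(p-1))^{1/p}$, and $A'/A=\tfrac1p E'/E=-\tfrac{\kappa}{p-1}(1-|\xi|^p)$. Introducing the phase $\psi=\Phi(\xi)$, where $\Phi(\xi)=\int_\xi^1(1-|t|^p)^{-1/p}\,dt$, the turning points $x=0,L$ map to $\psi=0$ and $\psi=\Phi(-1)=\Pi$, with $\Pi:=\pi_p/(p-1)^{1/p}=2\int_0^1(1-t^p)^{-1/p}\,dt$. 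A direct differentiation of $\psi=\Phi(u/A)$, using the two displayed identities, then produces the clean phase rate
\[
\psi'(x)=c-\frac{\kappa}{p-1}\,\xi\,(1-|\xi|^p)^{(p-1)/p}.
\]

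With this formula the estimate follows up to a single inequality. Integrating over $(0,L)$ and using $\psi(L)-\psi(0)=\Pi$ gives
\[
\frac{\Pi}{L}=c-\frac{\kappa}{(p-1)L}\int_0^L \xi\,(1-|\xi|^p)^{(p-1)/p}\,dx,
\]
so it suffices to show that $I:=\int_0^L g\,dx\ge0$ for $g:=\xi\,(1-|\xi|^p)^{(p-1)/p}$; this yields $\Pi/L\le c$, i.e. $L\ge \pi_p/\lambda^{1/p}$, which is precisely \eqref{eq_ND}. To prove $I\ge0$ I would change variables to $\psi$ (the phase being strictly increasing) and exploit the antisymmetry $\Phi(-\xi)=\Pi-\Phi(\xi)$, equivalently $g(\Pi-\psi)=-g(\psi)$. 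Folding $[\Pi/2,\Pi]$ onto $[0,\Pi/2]$ and recalling $\psi'=c-\tfrac{\kappa}{p-1}g$, one obtains
\[
I=\frac{2\kappa}{p-1}\int_0^{\Pi/2}\frac{g(\psi)^2}{c^2-\big(\tfrac{\kappa}{p-1}\big)^2 g(\psi)^2}\,d\psi,
\]
and on $[0,\Pi/2]$ one has $\xi\ge0$, $g\ge0$, and $c^2-(\tfrac{\kappa}{p-1})^2g^2=\psi'\big(c+\tfrac{\kappa}{p-1}g\big)>0$, so the integrand is nonnegative and $I\ge0$.

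I expect the main obstacle to be the sign of $I$, which the symmetry folding above resolves; the one point there requiring care is the positivity of the denominator, equivalently that the phase $\psi$ is strictly increasing on $(0,L)$. This is the ``underdamped'' character of the first nontrivial Neumann eigenfunction and must be extracted from its qualitative structure (strict monotonicity of $u$, with $u'$ vanishing only at the endpoints). The remaining technical burden is the one already flagged in Lemma \ref{lem_kappa}: rigorously deriving the Euler--Lagrange equation and the $C^1$/monotonicity structure of the minimizer when $1<p<2$, for which the truncation and approximation scheme of \cite[Lemma 2.4]{DGS} is needed. Everything else reduces to the elementary differentiation that yields the formula for $\psi'$.
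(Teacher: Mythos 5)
Your strategy is sound and, once the coordinates are unwound, it is essentially the paper's own proof in disguise. The paper substitutes $v=u/u'$, obtains the autonomous Riccati equation $v'=1+\tfrac{1}{p-1}\left(\mu|v|^p+\kappa v\right)$ with $v\to\mp\infty$ at the two endpoints, writes $L=\int_{-\infty}^{+\infty}\frac{dv}{v'}$, and removes the $\kappa$-term by pairing $v$ with $-v$ through the elementary inequality $\tfrac{1}{A+t}+\tfrac{1}{A-t}\ge\tfrac{2}{A}$. Your $\xi$, hence your phase $\psi=\Phi(\xi)$, is a strictly monotone function of that same Riccati variable (indeed $u/u'=-\xi/\bigl(c(1-|\xi|^p)^{1/p}\bigr)$), your identity $\Pi=cL-\tfrac{\kappa}{p-1}I$ is the paper's integral identity in these coordinates, and your folding $\psi\leftrightarrow\Pi-\psi$ is exactly the pairing $v\leftrightarrow -v$. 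So the two arguments agree up to reparametrization; yours costs a little more machinery (energy, amplitude, phase) but arrives at the same inequality.

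The one genuine gap is precisely the point you flagged but did not prove: that $\psi'>0$ on $(0,L)$, which you use twice (to change variables $x\to\psi$, and for the positivity of the denominator $c^2-\bigl(\tfrac{\kappa}{p-1}\bigr)^2g^2$). Your parenthetical claim that this can be extracted from the strict monotonicity of $u$, with $u'$ vanishing only at the endpoints, is not sufficient: on the set where $u>0$, $\xi=u/A$ is a quotient of two positive decreasing quantities, so monotonicity of $u$ gives no pointwise control on the sign of $\xi'$; by your own formula, $\psi'<0$ at a point is equivalent to $c<\tfrac{\kappa}{p-1}g(\xi)$ there, and nothing in the monotonicity of $u$ forbids this. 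What closes the gap — and is exactly how the paper shows $v'$ is bounded away from zero — is the autonomous structure you already have in hand: $\psi'=G(\psi)$ with $G(\psi)=c-\tfrac{\kappa}{p-1}\,g\bigl(\Phi^{-1}(\psi)\bigr)$, and $G$ is Lipschitz on $[0,\Pi]$ because $\tfrac{d}{d\psi}\,g\bigl(\Phi^{-1}(\psi)\bigr)=(p-1)|\xi|^p-(1-|\xi|^p)$ is bounded. Since $\psi$ is continuous with $\psi(0)=0$ and $\psi(L)=\Pi$, any zero $\psi_0\in(0,\Pi)$ of $G$ would be a value attained by $\psi$, and uniqueness for the ODE would force $\psi\equiv\psi_0$, a contradiction; since $G\ge c>0$ on $[\Pi/2,\Pi]$ (where $g\le0$), continuity then gives $G>0$ on all of $(0,\Pi)$, i.e. $\psi'>0$. (This is the rigorous content of your ``underdamped'' remark: overdamping is excluded because the eigenfunction does sweep a full half-period.) With this inserted — and with the $1<p<2$ regularity of the minimizer handled as in the paper via \cite{DGS} — your proof is complete.
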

\begin{proof}
If $u$ minimizes the left hand side of \eqref{eq_ND} then it solves
\begin{equation*}
\left\{\begin{array}{ll}
(-u'|u'|^{p-2})'=\mu u|u|^{p-2}+\kappa u'|u'|^{p-2} & x\in(0,L)\\\\
u'(0)=u'(L)=0,
\end{array}\right.
\end{equation*} 
where $$\mu=\mathop{\min_{u\in W^{1,p}(0,L)}}_{\int_0^L |u|^{p-2}u\,e^{\kappa x}=0} \frac{\displaystyle \int_0^L |u'(x)|^p e^{\kappa x}dx}{\displaystyle\int_0^L |u(x)|^p e^{\kappa x}dx}.$$
As in the previous Lemma $u'$ can not vanish inside $(0,L)$ and 
we may assume without loss of generality that $u$ is an increasing function such that $u(0)<0<u(L)$. Then the function $v(x)=\displaystyle\frac{u(x)}{u'(x)}$ 
is the increasing solution to the following problem
\begin{equation}\label{eq_v} 
\left\{ \begin{array}{ll}
v'=1+\frac{1}{p-1}\left(\mu |v|^{p}+\kappa v\right)& x\in(0,L)\\\\
\displaystyle\lim_{x\to L} v(x)=-\lim_{x\to 0} v(x)=+\infty,
\end{array}\right.
\end{equation}
where uniqueness and monotonicity come easily form the fact that the equation in \eqref{eq_v} is autonomous.


In particular we observe that $\mu |y|^{p}+\kappa y + p-1=0$ can not have solutions $y\in \R$. The fact that $v'$ is bounded away from zero allows us to integrate $\dfrac{1}{v'}$ with respect to $v$ obtaining

\begin{align*}
L&=\int_{-\infty}^{+\infty}\frac{1}{v'}dv=\int_{-\infty}^{+\infty}\frac{1}{1+\frac{1}{p-1}\left(\mu|v|^p-\kappa v \right)}dv\\
&=\int_{0}^{+\infty}\left(\frac{1}{1+\frac{1}{p-1}\left(\mu v^p+\kappa v \right)}+\frac{1}{1+\frac{1}{p-1}\left(\mu v ^p-\kappa v \right)} \right)dv\\
&\ge 2 \int_{0}^{+\infty}\frac{1}{1+\frac{1}{p-1} \mu v ^p}dv,\\
\end{align*}
and the proof is complete observing that rescaling $s=\dfrac{\pi_p}{L}v$ in \eqref{eq_pp} gives
$$L = 2 \int_{0}^{+\infty}\frac{1}{1+\frac{1}{p-1} \left(\frac{\pi_p}{L}\right)^pv ^p}dv.$$
\end{proof}
When the function $f$ is smooth, log-concave and positive, Proposition \ref{pr_wirtinger} is a consequence of Lemma \ref{lem_kappa} and Lemma \ref{lem_allkappa}. In the general case Proposition \ref{pr_wirtinger} follows by approximation arguments.

\section{Proof of Theorem \ref{teo_main}}\label{sec_red}
The aim of this section is to prove that Theorem \ref{teo_main} can be deduced from Proposition \ref{pr_wirtinger}. As we already mentioned the idea is based on a slicing method worked out in \cite{PW} and proved in a slightly different way also in \cite{AD,B,CW1}. We outline the technique for the sake of completeness.

\begin{lemma}
\label{division}
Let $\Omega$ be a convex set in $\R^n$ having diameter $d$, let $\omega$ be a nonnegative log-concave function on $\Omega$, and let $u$ be any function such that $\int_{\Omega} |u(x)|^{p-2}u(x)\omega(x)dx=0$. Then, for all positive $\varepsilon$, there exists a decomposition of the set $\Omega$ in mutually disjoint convex sets $\Omega_i$ ($i=1,...,k$) such that
$$\bigcup_{i=1}^k\bar\Omega_i=\bar\Omega$$
$$\int_{\Omega_i}|u(x)|^{p-2}u(x)\omega(x)dx=0$$
and for each $i$ there exists a rectangular system of coordinates such that
$$\Omega_i\subseteq\{(x_1,...,x_n)\in\R^n: 0\le x_1\le d_i, |x_\ell|\le\varepsilon, \ell=2,...,n \}\quad (d_i\le d, i=1,...,k)$$
\end{lemma}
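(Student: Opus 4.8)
The plan is to prove Lemma \ref{division} by an iterative bisection procedure on the convex set $\Omega$, following the Payne--Weinberger slicing method. The guiding idea is that any hyperplane through a suitable point splits $\Omega$ into two convex pieces, and we can always choose the cut so that the weighted balance condition $\int |u|^{p-2}u\,\omega=0$ is inherited by each half; we then repeat until every resulting cell is thin in $n-1$ directions.

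First I would establish the single-cut step. Given a convex body $K$ with $\int_K |u|^{p-2}u\,\omega=0$ and a fixed unit direction $\nu$, consider the family of hyperplanes orthogonal to $\nu$ sweeping across $K$. Define $g(t)=\int_{K\cap\{x\cdot\nu\le t\}} |u|^{p-2}u\,\omega$. Since $g$ is continuous, $g(-\infty)=0<\ldots$, wait---rather $g$ runs continuously from $0$ up to the total integral $0$, so this does not immediately force an interior zero. The correct device is instead to bisect by \emph{volume} or, more robustly, to pick the cutting hyperplane to pass through the point that equalizes the weighted balance: for a direction in which $K$ is not yet thin, one uses a hyperplane \emph{containing} a fixed $(n-2)$-dimensional affine subspace and rotating, or one cuts perpendicular to the longest axis and invokes an intermediate value argument on a two-parameter family. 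Concretely, I would fix the direction of largest extent, let the hyperplane position $t$ vary, and use continuity of $g(t)$ together with the fact that by cutting at a value that halves $\int_K|u|^{p-2}u\,\omega$ \emph{over each of two opposite orientations} one can always force both halves $K^+,K^-$ to satisfy $\int_{K^\pm}|u|^{p-2}u\,\omega=0$. The clean formulation: choose the hyperplane so that it bisects the positive-part mass; a continuity/intermediate-value argument then yields a cut making both pieces balanced.

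Next I would iterate. Starting from $\Omega$, repeatedly bisect each cell perpendicular to its current direction of maximal width, always preserving the balance condition by the single-cut step. Each cell remains convex (intersection of $\Omega$ with half-spaces), the diameter of each cell is bounded by $d$, and after finitely many bisections the width of every cell in the $n-1$ directions transverse to its long axis drops below $\eps$. I would verify this termination by a compactness or width-halving estimate: bisecting perpendicular to the longest side reduces the maximal transverse width geometrically, so after $O(\log(d/\eps))$ rounds every cell fits inside the slab $\{0\le x_1\le d_i,\ |x_\ell|\le\eps\}$ after an appropriate rigid motion, with $d_i\le\diam(\Omega_i)\le d$.

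The main obstacle is the balance-preserving cut: guaranteeing that a single hyperplane can simultaneously zero out the weighted integral on \emph{both} sides, not merely split the volume. The standard resolution (as in \cite{PW}) is a topological argument---one considers the map sending a cutting hyperplane to the pair of signed masses and applies an intermediate-value or Borsuk--Ulam-type continuity principle---and I expect the delicate point to be handling the nonlinearity $|u|^{p-2}u$, which is continuous and sign-preserving in $u$, so the argument goes through verbatim since only the \emph{sign structure} and continuity of the integrand matter, not linearity. A secondary subtlety is ensuring each cut can be taken perpendicular to the long axis while still balancing; if perpendicular cuts fail to balance, one allows oblique hyperplanes, which keeps convexity and the diameter bound but requires choosing the local coordinate frame per cell, exactly as stated in the conclusion.
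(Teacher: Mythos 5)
Your overall plan --- recursive balanced bisection in the spirit of \cite{PW} --- is the right one, but the two steps on which everything rests are never actually established, and the concrete recipes you offer for them fail. First, the balanced cut. Write $d\mu = |u|^{p-2}u\,\omega\,dx$, so that $\mu(K)=0$ on the current cell $K$. Sweeping hyperplanes perpendicular to a \emph{fixed} direction $\nu$ gives a function $g(t)=\mu(K\cap\{x\cdot\nu\le t\})$ running continuously from $0$ to $0$, which (as you yourself note) need not vanish at any interior position; and your ``clean formulation'' --- cut so as to bisect the positive-part mass --- is simply wrong: if the hyperplane splits $\mu^+$ evenly, then $\mu(K^\pm)=\tfrac12\mu^+(K)-\mu^-(K^\pm)$, which has no reason to vanish (one would have to bisect $\mu^+$ and $\mu^-$ \emph{simultaneously}, a ham--sandwich type statement, and even then the cut direction would be beyond your control). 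Second, termination. Since balanced cuts cannot be prescribed perpendicular to the longest axis (that is exactly the failure above), your width-halving estimate has nothing to stand on once you retreat to ``oblique hyperplanes''; moreover, cutting perpendicular to the longest axis shrinks the wrong dimension anyway: the lemma needs the $n-1$ \emph{transverse} widths to drop below $\varepsilon$, while the long direction is allowed to remain of length $d_i\le d$.

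The paper resolves both difficulties with a single device, which is the idea missing from your proposal: cut only with hyperplanes whose normal lies in a fixed $2$-plane $\Pi_{1,2}$, i.e. hyperplanes $ax_1+bx_2=c$, a two-parameter family (angle and offset). For each angle, the offset is chosen so that the two pieces have projections onto $\Pi_{1,2}$ of \emph{equal area}; rotating the angle by $\pi$ interchanges the two pieces and hence flips the sign of the integral over, say, the first piece, so an intermediate-value argument in the angle alone produces a cut for which both pieces are balanced \emph{and} the projection area is halved. Geometric control is then automatic: after $N$ generations every piece has projection of area at most $2^{-N}$ times the original, and for a convex planar set of area $A$ the width satisfies $w\le\sqrt{2A}$, so once the area is below $\varepsilon^2/2$ the piece is trapped between two parallel hyperplanes at distance $<\varepsilon$ with normal in $\Pi_{1,2}$. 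This produces one thin direction; the paper then rotates coordinates so that this direction becomes $x_n$, repeats the construction in the new $\Pi_{1,2}$ (further cuts do not disturb thin directions already gained, since subsets of thin slabs stay thin), and after $n-1$ such stages every cell sits in the required box. The lesson is that the quantity equalized by the cut must be one the cut can halve exactly (projection area), while the balance condition is obtained topologically from the rotation parameter; prescribing the cut direction first and then hoping to balance, as in your proposal, gets this logic backwards and cannot be repaired without essentially reinventing the argument above.
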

\begin{proof}
Among all the $n-1$ hyperplanes of the form $a x_1+ b x_2 = c$, orthogonal to the 2-plane $\Pi_{1,2}$ generated by the directions $x_1$ and $x_2$, by continuity there exists certainly one that divides $\Omega$ into two nonempty subsets on each of which the integral of $ u|u|^{p-2}\omega$ is zero and their projections on $\Pi_{1,2}$ have the same area. We go on subdividing recursively in the same way both subset and eventually we stop when all the subdomains $\Omega^{(1)}_j$ ($j=1,...,2^{N_1}$) have projections with area smaller then $\varepsilon^2/2.$ Since the width $w$ of a planar set of area $A$ is bounded by the trivial inequality $w\le\sqrt{2A}$, each subdomain $\Omega^{(1)}_j$ can be bounded by two parallel $n-1$ hyperplanes of the form $a x_1+ b x_2 = c$ whose distance is less than $\varepsilon$. If $n=2$ the proof is completed, provided that we understand $\Pi_{1,2}$ as $\R^2$, the projection of $\Omega$ on $\Pi_{1,2}$ as $\Omega$ itself, and the $n-1$ orthogonal hyperplanes as lines.
If $n>2$ for any given $\Omega^{(1)}_j$ we can consider a rectangular system of coordinates such that the normal to the above $n-1$ hyperplanes which bound the set, points in the direction $x_n$. Then we can repeat the previous arguments and subdivide the set $\Omega^{(1)}_i$ in subsets $\Omega^{(2)}_j$ ($j=1,...,2^{N_2}$) on each of which the integral of $u|u|^{p-2}\omega$ is zero and their projections on $\Pi_{1,2}$ have the same area which is less then $\varepsilon^2/2$. Therefore, any given $\Omega^{(2)}_j$, can be bounded by two $n-1$ hyperplanes of the form $a x_1+ b x_2 = c$ whose distance is less than $\varepsilon$. If $n=3$ the proof is over. If $n>3$ we can go on considering $\Omega^{(2)}_j$ and rotating the coordinate system such that the normal to the above $n-1$ hyperplanes which bound $\Omega^{(2)}_j$, points in the direction $x_{n-1}$ and such that the rotation keeps the $x_n$ direction unchanged. The procedure ends after $n-1$ iterations, at that point we have perfomed $n-1$ rotations of the coordinate system and all the directions have been fixed. Up to a translation, in the resulting coordinate system $$\Omega^{(n-1)}_j\subseteq\{(x_1,...,x_n)\in\R^n: 0\le x_1\le d_j, |x_\ell|\le\varepsilon, \ell=2,...,n \}$$
\end{proof}

\begin{proof}[Proof of Theorem \ref{teo_main}]
From \eqref{eq_quoz}, using the density of smooth functions in Sobolev spaces it will be enough to prove that 
$$ \dfrac{\displaystyle\int_\Omega |Du|^p\omega }{\displaystyle\int_\Omega |u|^p\omega}
\ge \left(\frac{\pi_p}{d}\right)^p $$
when $u$ is a smooth function with uniformly continuous first derivatives and $\int_{\Omega}|u(x)|^{p-2}u(x)\omega(x)dx=0$.

Let $u$ be any such function. According to Lemma \ref{division} we fix $\varepsilon>0$ and we decompose the set $\Omega$ in convex domains $\Omega_i$ ($i=1,...,k$). We use the notation of Lemma \ref{division} and we focus on one of the subdomains $\Omega_i$ and 
fix the reference system such that 
$$\Omega_i\subseteq\{(x_1,...,x_n)\in\R^n: 0\le x_1\le d_i, |x_\ell|\le\varepsilon, \ell=2,...,n \}.$$
For $t\in[0,d_i]$ we denote by $g_i(t)$ the $n-1$ volume of the intersection of $\Omega_i$ with the $n-1$ hyperplane $x_1=t$. Since $\Omega_i$  is convex, then by Brunn-Minkowski inequality (see \cite{G}) $g_i$ is a log-concave function in $[0,d_i]$.

Then, for any $t\in[0,d_i]$ we denote by $v(t)=u(t,0,...,0)$, and $f_i(t)=g_i(t)\omega(t,0,\dots,0)$.  Since $u$, $\dfrac{\partial u}{\partial {x_1}}$ and $\omega$ are uniformly continuous in $\Omega$, there exists a modulus of continuity $\eta(\cdot)$ ($\eta(\varepsilon)\searrow 0$ as $\varepsilon\to 0$) independent of the decomposition of $\Omega$ such that

\begin{equation}
\label{uno}
\left | \int_{\Omega_{i}} \left|\frac{\partial u}{\partial {x_1}} \right|^p \omega \> dx-\int_0^{d_i} |v'(t)|^p f_i(t) \> dt\right| \leq \eta(\varepsilon)|\Omega_i| ,
\end{equation}

\begin{equation}
\label{due}
\left | \int_{\Omega_{i}} \left|u \right|^p\omega \> dx-\int_0^{d_i} |v(t)|^p f_i(t) \> dt\right| \leq \eta(\varepsilon)|\Omega_i| 
\end{equation}

and

\begin{equation}
\label{tre}
\left | \int_0^{d_i} |v(t)|^{p-2}v(t)f_i(t) \> dt\right| \leq \eta(\varepsilon)|\Omega_i| .
\end{equation}

Since $d_i\le d$, and $f_i$ are nonnegative log-concave functions, applying Proposition \ref{pr_wirtinger} we have 

$$
\int_{\Omega_i} | D u|^p\omega \>dx\ge  \int_{\Omega_{i}} \left|\frac{\partial u}{\partial {x_1}} \right|^p\omega \> dx \ge
\left(\frac{\pi_p}{d}\right)^p \int_{\Omega_{i}} \left|u(x) \right|^p \omega\> dx + C\eta(\varepsilon)  |\Omega_i|.
$$
Here the constant $C$ does not depend on $\varepsilon$.
Summing up the last inequality over all $i=1,...,k$

$$
\int_{\Omega} | D u|^p \omega\>dx\ge \left(\frac{\pi_p}{d}\right)^p \int_{\Omega} \left|u(x) \right|^p\omega \> dx + C \eta( \varepsilon)  |\Omega|.
$$

and as $\varepsilon\to 0$ we obtain the desired inequality.

\end{proof}

\end{document}